\def\h{ {\cal H} }
\def\l{ {\cal L} }
\def\a{ {\cal A} }
\def\b{ {\cal B} }
\def\u{ {\cal U} }
\def\m{ {\cal M} }
\def\n{ {\cal N} }
\def\ii{ {\cal I} }
\def\t{ {\cal T} }
\def\s{ {\cal S} }
\def\p{ {\cal P} }
\def\xx{ {\bf x} }
\def\zz{ {\bf z} }
\def\tt{ {\bf t} }
\newtheorem{teo}{Theorem}[section]
\newtheorem{prop}[teo]{Proposition}
\newtheorem{lem}[teo]{Lemma}
\newtheorem{coro}[teo]{Corollary}
\theoremstyle{definition}
\newtheorem{rem}[teo]{Remark}
\title{Geodesics of projections in von Neumann algebras}
\author{Esteban Andruchow\footnote{{\sc  {Instituto Argentino de Matem\'atica, `Alberto P. Calder\'on', CONICET, Saavedra 15 3er. piso,
(1083) Buenos Aires, Argentina;
and Universidad Nacional de General Sarmiento, J.M. Gutierrez 1150 (1613), Los Polvorines, Argentina
}} e-mail: eandruch@ungs.edu.ar}}
\begin{document}

\maketitle 

\begin{abstract}
Let $\a$ be a von Neumann algebra and $\p_\a$ the manifold of projections in $\a$. There is a natural linear connection in $\p_\a$, which in the finite dimensional case coincides with the  the Levi-Civita connection of the Grassmann manifold of $\mathbb{C}^n$. In this paper we show that two projections $p,q$ can be joined by a geodesic, which  has minimal length (with respect to the metric given by the usual norm of $\a$), if and only if
$$
p\wedge q^\perp\sim p^\perp\wedge q,
$$
where $\sim$ stands for the Murray-von Neumann equivalence of projections. It is shown that the minimal geodesic is unique if and only if $p\wedge q^\perp= p^\perp\wedge q=0$. 
If $\a$ is a finite factor, any pair of projections in the same connected component of $\p_\a$ (i.e., with the same trace) can be joined by a minimal geodesic.

We explore certain relations with Jones' index theory for subfactors. For instance, it is shown that if $\n\subset\m$ are {\bf II}$_1$ factors with finite index $[\m:\n]=\tt^{-1}$, then the geodesic distance $d(e_\n,e_\m)$ between the induced projections $e_\n$ and $e_\m$ is $d(e_\n,e_\m)=\arccos(\tt^{1/2})$.
\end{abstract}

\bigskip

{\bf 2010 MSC:}  58B20, 46L10, 53C22

{\bf Keywords:}  Projections,  geodesics of projections, von Neumann algebras, index for subfactors.

\section{Introduction}
If $\a$ is a C$^*$-algebra, let $\p_\a$ denote the set of (selfadjoint) projections in $\a$. $\p_\a$ has a rich geometric structure, see for instante the papers \cite{pr} by H. Porta and L.Recht and \cite{cpr} by G. Corach, H. Porta and L. Recht. In these works, it was shown that $\p_\a$ is a C$^\infty$ complemented submanifold of $\a_{s}$, the set of selfadjoint elements of $\a$, and has a natural linear connection, whose geodesics can be explicitly computed. A  metric is introduced, called in this context a Finsler metric: since the tangent spaces of $\p_\a$ are closed and complemented  linear subspaces of $\a_s$, they can be endowed with the norm metric. With this Finsler metric, Porta and Recht \cite{pr} showed that two projections $p,q\in\p_\a$ which satisfy that $\|p-q\|<1$ can be joined by a unique geodesic, which is minimal for the metric (i.e., it is shorter than any other smooth curve in $\p_\a$ joining the same endpoints).

In general, two projections $p,q$ in $\a$ satisfy that $\|p-q\|\le 1$, so that what  remains to consider is what happens in the extremal case $\|p-q\|=1$: under what conditions does there exist a geodesic, or a minimal geodesic, joining them. 

In the case when $\a=\b(\h)$ the algebra of all bounded linear operators in a Hilbert space $\h$, it is known (see for instance \cite{jmaa}) that there exists a geodesic joining $p$ and $q$ if and only if
$$
\dim R(p)\cap N(q)=\dim N(p)\cap R(q).
$$
The geodesic is unique if and only if these intersections are trivial. 

The purpose of this note is to show that these facts remain valid if $\a$ is a von Neumann algebra, if we replace $\dim$ by the dimension relative to $\a$. Namely, it is shown that there exists a minimal geodesic joining $p$ and $q$ in $\p_\a$, if and only if
$$
p\wedge q^\perp \sim p^\perp \wedge q.
$$
Here $\wedge$ denotes the infimum of two projections, $p^\perp=1-p$, and $\sim$ is the Murray-von Neumann equivalence of projections. Also, it is shown that there exists a unique minimal geodesic if and only if
$$
p\wedge q^\perp=0=p^\perp \wedge q.
$$
We show that if $\a$ is a finite factor, any pair of projections in $\a$ in the same connected component of $\p_\a$ (i.e., with the same trace), can be joined by a minimal geodesic.

In the final section of this paper, we explore the relationship with the index theory of von Neumann factors, introduced by V. Jones in \cite{jones}. A pairing $\n\subset \m$ of factors of type {\bf II}$_1$, induces a sequence of projections, by means of the {\it basic construction}. We show that one recovers Jones index as a geodesic distance (minima of lengths of curves joining two given points): if $e, f$ are two consecutive terms in the sequence of projections, then 
$$
d(e,f)=\arccos(\tt^{1/2}),
$$
where $\tt^{-1}=[\m:\n]$. Also we show that if $\n_0,\n_1\subset\m$ with Jones' projections $e_0, e_1$, satisfy that $\|e_0-e_1\|<1$, then the unique geodesic $\delta(t)$ induces a smooth path of  conditional expectations between $\m$ and intermediate factors $\n_t$, and the parallel transport of this geodesic, induces a smooth path of  normal $*$-isomorphimsms between $\n_0$  and $\n_t$.

\section{Preliminaries}

The space $\p_\a$ is sometimes called the Grassmann manifold of $\a$. The reason for this name is that in the case when $\a=\b(\h)$, $\p_{\b(\h)}$ parametrizes the set of closed subspaces of $\h$: to each closed subspace $\s\subset\h$ corresponds the orthogonal projection $P_\s$ onto $\s$. Let us describe below the main features of the geometry of $\p_\a$ in the general case.

\subsection{Homogeneous structure}

Denote by $\u_\a=\{u\in\a: u^*u=uu^*=1\}$
 the unitary group of $\a$. It is a Banach-Lie group, whose Banach-Lie algebra is
$\a_{as}=\{x\in\a: x^*=-x\}$. This group acts on $\p_\a$ by means of $u\cdot p=upu^*, \ u\in\u_\a, \ p\in\p_\a$.
The action is smooth and locally transitive. It is known (see \cite{pr}, \cite{cpr}) that $\p_\a$ is what in differential geometry is called a {homogeneous space} of the group $\u_\a$. The local structure of $\p_\a$ is described using this action. For instance, the tangent space $(T\p_\a)_p$ of $\p_\a$ at $p$ is given by  $(T\p_\a)_p=\{x\in\a_s: x=px+xp\}$.

The isotropy subgroup of the action at $p$, i.e., the elements of $\u_\a$ which fix a given $p$, is $\ii_p=\{v\in\u_\a: vp=pv\}$.
The isotropy algebra $\mathfrak{I}_p$ at $p$ is its Banach-Lie algebra
$\mathfrak{I}_p=\{y\in\a_{as}: yp=py\}$.

It is useful, in order to describe and understand the geometry of $\p_\a$, to consider the {\it diagonal / co-diagonal} decomposition of $\a$ in terms of a fixed projection $p_0\in\p_\a$. Elements $x\in\a $ which commute with $p_0$, or equivalently, commute with the symmetry $2p_0-1$, when written as $2\times 2$ in terms of $p_0$, have diagonal matrices.  Co-diagonal matrices correspond with elements in $\a$ which anti-commute with $2p_0-1$.

Then, the isotropy subgroup and the isotropy algebra $\ii_{p_0}, \mathfrak{I}_{p_0}$ at $p_0$, are respectively the sets of diagonal unitaries and diagonal anti-Hermitian elements of $\a$. On the other side, the tangent space $(T\p_\a)_{p_0}$ is the set of diagonal selfadjoint  elements of $\a$.

\subsection{Reductive structure}

Given an homogeneous space, a {\it reductive } structure is a smooth distribution $p\mapsto  {\bf H}_p\subset\a_{as}$, $p\in\p_\a$, of supplements of $\mathfrak{I}_p$ in $\a_{as}$, which is invariant under the action of $\ii_p$. That is, a distribution ${\bf H}_p$ of closed linear subspaces of $\a_{as}$ verifying that ${\bf H}_p\oplus \mathfrak{I}_p=\a_{as}$; $v{\bf H}_pv^*={\bf H}_p$ for all $v\in\ii_p$;  and the map $p\mapsto {\bf H}_p$ is smooth.

In the case  of $\p_\a$, the choice of the (so called) {\it horizontal}  subspaces ${\bf H}_p$ is  natural. The horizontal ${\bf H}_p$  defined in \cite{cpr} is  
${\bf H}_p=\{\left(\begin{array}{cc} 0 & z \\ -z^* & 0 \end{array} \right): z\in p\a p^\perp\}$, i.e., the set of co-diagonal anti-Hermitian elements of $\a$

As in classical differential geometry, a reductive structure on a homogeneous space defines a linear connection: if $X(t)$ is a smooth curve of vectors tangent to a smooth curve $p(t)$ in $\p_\a$, i.e., a smooth curve of selfadjoint elements of $\a$, which are pointwise co-diagonal with respect to $p(t)$, then the covariant derivative of the linear connection is given by
$$
\frac{D}{dt}X(t):=\hbox{diagonal part w.r.t. } p(t) \hbox{ of } \dot{X}(t)=p(t)\dot{X}(t)p(t)+p^\perp(t)\dot{X}(t)p^\perp(t).
$$
It is not difficult to deduce then that a {\it geodesic}  starting at $p_0\in\p_\a$ is given by the action of a one parameter group with horizontal (anti-Hermitian co-diagonal) velocity on $p_0$ . Namely, given the base point $p_0\in\p_\a$, and a tangent vector ${\bf x}=\left(\begin{array}{cc} 0 & x \\ x^*  & 0 \end{array} \right)\in (T\p_\a)_{p_0}$, the unique geodesic $\delta$ of $\p_\a$ with $\delta(0)=p_0$ and $\dot{\delta}(0)={\bf x}$ is given by
$$
\delta(t)= e^{tz_\xx}p_0e^{-tz_\xx},
$$
where $z_{\xx}:=\left(\begin{array}{cc} 0 & -x \\ x^*  & 0 \end{array} \right) $.
The horizontal element $z_\xx$ is characterized as the unique horizontal element at $p_0$ such that $[z_\xx,p_0]=\xx$.

\subsection{Finsler metric}

As we mentioned above, one endows each tangent space $(T\p_\a)_p$ with the usual norm of $\a$. We emphasize that this (constant) distribution of norms is not a Riemannian metric (the C$^*$-norm is not given by an inner product), neither is it a Finsler metric in the classical sense (the map $a\mapsto \|a\|$ is non differentiable). Therefore the minimality result which we describe below does not follow from general considerations. It was proved in \cite{pr} using ad-hoc techniques. 
\begin{enumerate}
\item
Given $p\in\p_\a$ and $\xx\in(T\p_\a)_p$, normalized so that $\|\xx\|\le\pi/2$, then the geodesic $\delta$ remains minimal for all $t$ such that $|t|\le 1$.
\item
Given $p,q\in\p_\a$ such that $\|p-q\|<1$, there exists a unique minimal geodesic $\delta$ such that $\delta(0)=p$ and $\delta(1)=q$.
\end{enumerate}
We shall call these geodesics (with initial speed $\|\xx\|\le \pi/2$) {\it normalized} geodesics.

\section{Von Neumann algebras}

In this paper we consider the case when $\a$ is a von Neumann algebra. We shall suppose $\a$ acting in a Hilbert space $\h$ (i.e., $\a\subset\b(\h)$). As we shall see, this representation is auxiliary, and the results on the geometry of $\p_\a$ do not depend on the representation. The main assertion of this section is that the conditions of existence and uniqueness of minimal geodesics joining given projections $p,q\in\p_\a$ are the a natural generalization of the conditions valid in the case of $\b(\h)$. 

If $p,q\in\p_\a$, we denote by $p^\perp=1-p$, and by $p\wedge q$ the projection onto $R(p)\cap R(q)$ (which belongs to $\p_\a$); $p$ and $q$ are said to be {\it Murray\ -\ von Neumann equivalent}, in symbols $p\sim q$, if there exists $v\in\a$ (a partial isometry) such that $v^*v=p$ and $vv^*=q$. 
Our main result follows:
\begin{teo}\label{teorema1}
Let $p,q\in\p_\a$.
\begin{enumerate}
\item
There exists a geodesic $\delta$  of $\p_\a$ joining $p$ and $q$ if and only if
$$
p\wedge q^\perp\sim p^\perp\wedge q.
$$
Moreover, the geodesic can be chosen minimal (i.e., normalized).
\item
There is a unique normalized geodesic if and only if $p\wedge q^\perp=p^\perp\wedge q =0$.
\end{enumerate}
\end{teo}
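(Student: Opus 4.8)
The plan is to parametrize the geodesics issuing from $p$ by their horizontal velocity and to turn the problem into an explicit operator equation. By the description recalled in the preliminaries, every geodesic with $\delta(0)=p$ has the form $\delta(t)=e^{tz_\xx}pe^{-tz_\xx}$ with $z_\xx=\left(\begin{array}{cc}0&-x\\x^*&0\end{array}\right)$ in the $2\times 2$ decomposition induced by $p$, where $x=pxp^\perp$; the geodesic is \emph{normalized} (hence, by the Porta--Recht bound, minimal on $[0,1]$) exactly when $\|\xx\|=\|x\|\le\pi/2$. Since $p^2=p$ and $z_\xx^*=-z_\xx$, one has $\delta(1)=e^{z_\xx}pe^{-z_\xx}=(e^{z_\xx}p)(e^{z_\xx}p)^*$, so $\delta$ joins $p$ to $q$ if and only if $(e^{z_\xx}p)(e^{z_\xx}p)^*=q$, and I would attack this last equation directly.

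First I would compute $e^{z_\xx}p$ in closed form. Splitting the exponential into even and odd powers and writing $h=(xx^*)^{1/2}\in p\a p$, one finds $e^{z_\xx}p=\left(\begin{array}{cc}\cos h&0\\x^*\frac{\sin h}{h}&0\end{array}\right)$, so that $\delta(1)=q$ is equivalent to the system $\cos^2h=pqp$, $\ \cos h\,\frac{\sin h}{h}\,x=pqp^\perp$ and $\ x^*\frac{\sin^2h}{h^2}x=p^\perp qp^\perp$, read as identities of operators on $R(p)$ and $R(p^\perp)$. The first equation fixes the spectrum of $h$ through $\cos^2h=pqp$; a short computation shows that for $\xi\in R(p)$ one has $pqp\,\xi=0$ iff $q\xi=0$ and $pqp\,\xi=\xi$ iff $q\xi=\xi$, whence the spectral projections of $h$ satisfy $\{\cos h=0\}=p\wedge q^\perp=:e$ and $\{h=0\}=p\wedge q$. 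In particular the second equation degenerates exactly on $e$, where $\cos h=0$; there the remaining equations can be satisfied only if $e$ is the initial space of a partial isometry with final space $p^\perp\wedge q$. This already gives the forward implication of item 1: the existence of any geodesic forces $p\wedge q^\perp\sim p^\perp\wedge q$.

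For the converse and minimality I would produce a normalized solution explicitly. Imposing $\|x\|\le\pi/2$ forces $h=\arccos\big((pqp)^{1/2}\big)\in[0,\pi/2]$, so that $h$ is \emph{uniquely} determined by $p,q$. On $\{0\le h<\pi/2\}$ I would read $x$ off the polar decomposition $pqp^\perp=\cos h\,\sin h\,w$, setting $x=hw$; this is a bounded element of $\a$ that solves the first two equations, and the third then follows from $q^2=q$. On the remaining piece $e=\{h=\pi/2\}$ I would add the term $\frac\pi2 v$, which exists precisely when $p\wedge q^\perp\sim p^\perp\wedge q$. The full velocity $x=hw+\frac\pi2 v$ satisfies $\|x\|\le\pi/2$, so the geodesic is minimal, proving item 1. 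For item 2, observe that on $\{h<\pi/2\}$ the data $h$ and $w$, hence $x=hw$, are rigid; the only freedom is the choice of $v$ on $e$. Hence there is exactly one normalized geodesic iff $e=0$ and $p^\perp\wedge q=0$: when both vanish, $x=hw$ is the unique solution; when $e\ne0$ (and $e\sim p^\perp\wedge q$), $v$ may be replaced by $vu$ for any unitary $u$ of the corner $(p^\perp\wedge q)\a(p^\perp\wedge q)$, giving infinitely many geodesics. This is item 2.

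I expect the main obstacle to be the boundary case in which $\pi/2$ lies in the spectrum of $h$ (equivalently $\|p-q\|=1$) without being an eigenvalue. Then $\cos h$ is injective but not bounded below on $\{h<\pi/2\}$, so one cannot solve the second equation by literally inverting its coefficient; the point is rather that $pqp^\perp$ vanishes at exactly the right rate, so that $x=hw$, defined through the bounded functional calculus of $h$ and the polar decomposition of $pqp^\perp$, stays inside $\a$ (and thus $z_\xx\in\a_{as}$). The second delicate step, which I would settle by the symmetric computation interchanging $p$ and $q$, is the precise identification $v^*v=p^\perp\wedge q$ of the far end of the partial isometry: this is what sharpens the a priori weaker statement ``$e$ is equivalent to a subprojection of $p^\perp$'' into the exact equivalence $p\wedge q^\perp\sim p^\perp\wedge q$.
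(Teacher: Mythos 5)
Your construction is correct in substance and takes a genuinely different route from the paper's. The paper works with the Halmos five--fold decomposition and treats three blocks separately: the exponent is $0$ on $p\wedge q+p^\perp\wedge q^\perp$; it is $i\frac{\pi}{2}(w+w^*)$ on $p\wedge q^\perp+p^\perp\wedge q$, with $w$ the partial isometry furnished by the hypothesis; and on the generic part it is obtained by passing to Halmos' $2\times 2$ model and then pulling the model exponent $Z$ back into $\a$ via the identity $e^Z=\left(\begin{array}{cc} 1 & 0 \\ 0 & -1 \end{array}\right)V$, where $V$ is the unitary factor in the polar decomposition of $p_0+q_0-1$; this transfer step is the technical heart of the paper's existence proof. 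You avoid the model altogether: your $h=\arccos\big((pqp)^{1/2}\big)$ and the partial isometry $w$ in $pqp^\perp=\cos h\,\sin h\,w$ are produced by functional calculus and polar decomposition, both of which stay inside the von Neumann algebra, so membership of the exponent in $\a$ is automatic, and the boundary case ($\pi/2$ in the spectrum of $h$ without being an eigenvalue) is handled transparently. Your converse argument is essentially the paper's in different clothing (the paper uses the anticommutation $(2p-1)e^{\xx}=e^{-\xx}(2p-1)$, which is exactly what your closed form of $e^{z_\xx}$ encodes), while your uniqueness argument is intrinsic, whereas the paper quotes the known $\b(\h)$ result; that is a small gain in self-containedness.

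One step is asserted too quickly: the claim that the third equation $x^*\frac{\sin^2h}{h^2}x=p^\perp qp^\perp$ ``follows from $q^2=q$.'' It does not follow from the first two equations and the projection property alone: two projections with equal $(1,1)$- and $(1,2)$-corners relative to $p$ can have different $(2,2)$-corners (take $p=\mathrm{diag}(1,0)$, $q_1=0$, $q_2=\mathrm{diag}(0,1)$). Concretely, with $x=hw$ alone one gets $e^{z_\xx}pe^{-z_\xx}=q-p^\perp\wedge q$, which equals $q$ only when $p^\perp\wedge q=0$: the defect sits exactly on $p^\perp\wedge q$. The repair uses ingredients you already have. From $q^2=q$ and $\delta(1)^2=\delta(1)$ one gets, for the $(2,2)$-corner $c$ of either projection, the identity $wc=\sin^2 h\,w$ (multiply $pqp^\perp=pqp\cdot pqp^\perp+pqp^\perp c$ on the left and use injectivity of $\cos h\sin h$ on the final space of $w$); hence the two corners agree on the initial space of $w$, and their difference is supported in $(p^\perp\wedge q)+(p^\perp\wedge q^\perp)$, where it must be checked by hand. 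That check succeeds precisely when $vv^*=p\wedge q^\perp$ and $v^*v=p^\perp\wedge q$ exactly. In other words, the verification of the $(2,2)$-corner and the ``delicate identification of the far end of $v$'' that you flag in your last paragraph are one and the same computation, and both are resolved by choosing $v$ to implement the hypothesis $p\wedge q^\perp\sim p^\perp\wedge q$. With that point written out, your proof is complete.
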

\begin{proof}
Existence: suppose first that $p\wedge q^\perp\sim p^\perp\wedge q$. 
Consider following projections which sum $1$ and commute both with $p$ and $q$:
$$
e_{11}=p\wedge q \ , \ e_{00}=p^\perp\wedge q^\perp \ , \ e_{10}=p\wedge q^\perp \ , \ e_{11}=p^\perp\wedge q \ , \ e_0=1-\sum_{i,j=0,1} e_{i,j}.
$$
It is straightforward to verify that $e_{ij}$ commute with $p$ and $q$, and thus $e_0$ also does. The decomposition of the Hilbert space induced by these projections is sometimes called the Halmos decomposition of the space, in the presence of two closed subspaces ($R(p)$ and $R(q)$); the last subspace $R(e_0)$, is called the generic part of $p$ and $q$.
We shall construct the exponent $\xx$ of the geodesic joining $p$ and $q$ as a sum of anti-Hermitian elements in $\a$,
$$
\xx=\xx'+\xx''+\xx_0,
$$
where  $\xx'$ acts in the range of $e_{11}+e_{00}$, $\xx''$ acts in the range of $e_{10}+e_{01}$ and $\xx_0$  acts in the range of $e_0$. Moreover, each of these elements is co-diagonal with respect to the corresponding reduction of $p$ to these subspaces. First note that $pe_{ii}=qe_{ii}$ (on $e_{00}$ they are both zero, on $e_{11}$ they are both the identity). Thus the exponent $\xx'$ can be chosen $0$.

Let us consider next the part in $e_0$. Here we make use of the representation $\a\subset \b(\h)$. Denote by $\h_0=R(e_0)$, and by $p_0=pe_0$, $qe_0$ the reductions of $p,q$  to this subspace $\h_0$. Then, it is clear that $p_0, q_0$ lie in {\it generic position} (\cite{halmos}, \cite{dixmier}): their ranges and nullspaces intersect trivially. Thus, by a result by P. Halmos \cite{halmos}, there exist a Hilbert space $\l$, a positive operator $X\in\b(\l)$ ($\|X\|\le \p/2$ and a unitary isomorphism $\h_0\to\l\times\l$ which carries 
$$
p_0 \ \hbox{ to } \ \left(\begin{array}{cc} 1 & 0 \\ 0 & 0 \end{array} \right)\ , \  \hbox{ and } q_0\  \hbox{ to }\  \left(\begin{array}{cc} \cos^2(X) & \cos(X)\sin(X) \\ \cos(X)\sin(X) & \sin^2(X) \end{array} \right).
$$
Between these operator matrices, one can find the  (co-diagonal) exponent 
$$
Z=\left(\begin{array}{cc} 0 & X \\ -X & 0 \end{array} \right),
$$
which satisfies 
$$
e^Z\left(\begin{array}{cc} 1 & 0 \\ 0 & 0 \end{array} \right)e^{-Z}=\left(\begin{array}{cc} \cos^2(X) & \cos(X)\sin(X) \\ \cos(X)\sin(X) & \sin^2(X) \end{array} \right).
$$
These are straightforwward verifications, and provide the exponent for a geodesic joining the two operator matrices. One loses track though  of how elements of $\a$ are changed by the Halmos isomorphism. The key fact to relate these matrices to the former projections $p_0,q_0$ is the following elementary identity proved in \cite{jmaa}
\begin{equation}\label{formula}
e^Z=\left(\begin{array}{cc} 1 & 0 \\ 0 & -1 \end{array} \right) V,
\end{equation}
where $V$ is the unitary part in the polar decomposition of 
$$
B-1=\left(\begin{array}{cc} 1 & 0 \\ 0 & 0 \end{array} \right)+\left(\begin{array}{cc} \cos^2(X) & \cos(X)\sin(X) \\ \cos(X)\sin(X) & \sin^2(X) \end{array} \right)-1,
$$
i.e. $B-1=V|B-1|$. Again, this is an elementary matrix computation. Let $b_0=p_0+q_0$, and let $v_0$ be the isometric part in the polar decomposition (recall that $e_0$ is the unit in this part of the algebra)
$$
b_0-e_0=v_0|b_0-e_0|.
$$
Clearly $v_0\in\a$ and is carried by the Halmos isomorphism to $V$. Therefore, if one regards (\ref{formula}), it follows that the unitary element $v_0(2p_0-1)$ is carried by this ismorphism to $e^Z$, i.e. $e^Z$ corresponds to an element of $\a$. Moreover,
$$
\|Z\|=\|X\|\le \pi/2,
$$
which implies that $Z$ is the unique anti-Hermitian logarithm of $e^Z$ with spectrum in $(-i\pi,i\pi)$. It follows that there exists a unique element $\xx_0\in\a$ which corresponds to $Z$, and therefore satisfies 
$$
e^{\xx_0}p_0e^{-\xx_0}=q_0.
$$
It remains to construct the exponent $\xx''$ acting in $e_{10}+e_{01}$. Note that the reductions of $p$ and $q$ to this part are 
$$
p(e_{10}+e_{01})=p(p\wedge q^\perp+p^\perp \wedge q)=p\wedge q^\perp, 
$$
ans similarly $q(e_{10}+e_{01})=p^\perp \wedge q$. By hypothesis, there is a partial isometry $w\in\a$ such that 
$$
w^*w=p\wedge q^\perp \ \ \hbox{ and } \ \ ww^*=p^\perp\wedge q.
$$
Then $\xx''=i\frac{\pi}{2} (w+w^*)$ does the feat: since $p\wedge q^\perp \perp p^\perp \wedge q$, it follows that $\xx''$ is $p_{10}+p_{01}$ co-diagonal. Clearly $\|\xx''\|=\pi/2$. Note also that  $w^2=0$, so that 
$$
e^{\xx''}=i(w+w^*).
$$
Finally,
$$
e^{\xx''}(p\wedge q^\perp)=i(w+w^*)(w^*w)=iww^*w=iww^*(w+w^*)=(p^\perp\wedge q) e^{\xx''},
$$
i.e., $e^{\xx''}$ intertwines the reductions of $p$ and $q$ to this part.

If we put together $\xx=\xx'+\xx''+\xx_0$, which is an orthogonal sum, we have a $p$-co-diagonal anti-Hermitian element of $\a$, with $\|\xx\|\le\pi/2$ (note that $\xx''$ might be zero, if $p\wedge q^\perp=p^\perp\wedge q=0$), which satisfies
$$
e^{\xx}pe^{-\xx}=q.
$$

Conversely, suppose that there exists a normalized geodesic wich joins $p$ and $q$, i.e. there exists a $p$-co-diagonal anti-Hermitian element $\xx\in\a$ with $\|\xx\|\le \pi/2$ such that $e^{\xx}pe^{-\xx}=q$.  We claim that  $e^{\xx}$ maps $R(p\wedge q^\perp)$ onto $R(p^\perp\wedge q)$. Clearly $e^{\xx}$ maps $R(p)$ onto $R(q)$. Pick $\xi\in R(p\wedge q^\perp)=R(p)\cap N(q)$.  Then $e^{\xx}\xi\in R(q)$. It was noted in \cite{pr}, that the fact that $\xx$ is $p$-co-diagonal means that $\xx$  anti-commutes with $2p-1$. Thus,  
$$
(2p-1)e^{\xx}=e^{-\xx}(2p-1).
$$
Then, since $(2p-1)\xi=\xi$ and $(2q-1)\xi=-\xi$, 
$$
(2p-1)e^{\xx}\xi=e^{-\xx}(2p-1)\xi=e^{-\xx}\xi=-e^{-\xx}(2q-1)\xi=-(2p-1)e^{-\xx}\xi=-e^{\xx}(2p-1)\xi=-e^{\xx}\xi,
$$
i.e., $e^{\xx}\xi\in N(p)$, and thus  $e^{\xx}(R(p)\cap N(q))\subset R(q)\cap N(p)$
The other inclusion follows similarly (or by symmetry: in fact $\xx$ is also $q$-co-diagonal, because $-\xx$ is  the initial velocity of the reversed geodesic which starts at $q$). It folllows that $w=e^{\xx}(p\wedge q^\perp)\in\a$ is a partial isometry with initial space $p\wedge q^\perp$ and final space $p^\perp\wedge q$. 

Uniqueness: if $p\wedge q^\perp=p^\perp\wedge q=0$, then $R(p)\cap N(q)=N(p)\cap R(q)=\{0\}$, and there exists a unique normalized geodesic in $\p_{\b(\h)}$ joining $p$ and $q$. By the first part of the proof, there is a normalized geodesic joining them in $\p_\a$. Thus, it is unique. 

Conversely, suppose that there exists a unique geodesic joining $p$ and $q$. Then necessarily $p\wedge q^\perp\sim p^\perp\wedge q$. Suppose that these projections are non zero. Then, there are infinitely many different partial isometries $w$ such that 
$w^*w=p\wedge q^\perp$ and $ww^*=p^\perp\wedge q$. As in the first part of the proof, any such $w$ give rise to different exponents $\xx''$, and thus different $\xx$, i.e. different geodesics joining $p$ and $q$.  
\end{proof}

\begin{rem}
In the above result, it was shown in fact that the submanifold $\p_\a\subset \p_{\b(\h)}$ is totally geodesic: the geodesics of $\p_\a$ are geodesics of the bigger manifold $\p_{\b(\h)}$; if $p,q\in\p_\a$ are joined by a unique geodesic of $\p_{\b(\h)}$, then  this geodesic remains inside $\p_\a$.
\end{rem}

\section{Hopf-Rinow theorem in finite factors}

Two subspaces of dimension $k$ in $\mathbb{C}^n$ can be joined by a minimal geodesic of the Levi-Civita connection in the Grassmann manifold. This fact can be proved using the projection formalism. That is,  parametrizing subspaces with othogonal projections in $M_n(\mathbb{C})$, by means of 
$$
\mathbb{C}^n\supset \s\longleftrightarrow P_\s \in M_n(\mathbb{C}),
$$
where $P_\s$ is the orthogonal projection onto $\s$.
Two subspaces $\s,\t\subset\mathbb{C}^n$ have the same dimension if and only if the corresponding projections $P_\s,P_\t$ have the same rank, i.e.
$$
Tr(P_\s-P_\t)=0.
$$
Let us see that in this case one has, automatically, that 
$$
\dim(\s\cap\t^\perp)=\dim(\s^\perp\cap\t).
$$
This fact has an elementary proof. Let us prove it in a non totally elementary fashion, which will allow us to obtain a generalization. The operator $A=P_\s-P_\t$ is a selfadjoint contraction, and if $B=P_\s+P_\t$, 
$$
N(B-1)=\s\cap\t^\perp \ \oplus \ \s^\perp\cap\t=N(A-1)\oplus N(A+1).
$$
On the subspace $N(B-1)^\perp$, $B-1$ is an invertible matrix, and the symmetry $V$ in its polar decomposition
$B-1=V|B-1|$ satisfies that $VP_\s V=P_\t$ in $N(B-1)^\perp$.  Then $A$ is reduced by $N(B-1)$, and
$$
V (A\Big|_{N(B-1)^\perp})V=-A\Big|_{N(B-1)^\perp}.
$$
This implies that the spectrum of $A\Big|_{N(B-1)^\perp}$ is symmetric with respect to the origin: if $\lambda$ is an eigenvalue of $A$ with $|\lambda|<1$, then $-\lambda$ is also an eignevalue of $A$, and they have the same multiplicity: $\dim(N(A-\lambda))=\dim(N(A+\lambda))$. Then
$$
A=-P_{N(A+1)}+P_{N(A-1)}+A\Big|_{N(B-1)^\perp}=-P_{N(A+1)}+P_{N(A-1)}+\sum_{0<\lambda<1} \lambda (P_{N(A-\lambda)}-P_{N(A+\lambda)}).
$$
Thus, the fact that $Tr(A)=0$, means that 
$
Tr(P_{N(A-1)})=Tr(P_{N(A+1)}).
$
Thus $P_\s$ and $P_\t$ can be joined by a normalized geodesic. Remarkably, this geodesic is minimal for the Levi-Civita connection of the Grassmann manifold, but also, using the projection formalism, for the operator norm of $M_n(\mathbb{C})$, the $p$-Schatten norms (\cite{trans}), or more generally, for  unitary invariant norms (see \cite{alv}).

Let us suppose now that $\a$ is a finite von Neumann factor, with trace $\tau$. We shall see that the above argument holds (essentially unaltered):
\begin{teo}
Let $\a$ be a finite von Neumann factor with faithful normal trace $\tau$. Two projections $p,q\in\p_\a$  with $p\sim q$ (i.e., unitarily equivalent, or equivalently, in the same connected component of $\p_\a$) can be joined by a normalized geodesic.
\end{teo}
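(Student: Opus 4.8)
The plan is to reduce the statement to the existence criterion of Theorem \ref{teorema1}: it suffices to show that $p\sim q$ in a finite factor forces $p\wedge q^\perp\sim p^\perp\wedge q$. Since $\a$ is a factor, Murray--von Neumann equivalence of projections is detected by the trace, so the whole problem collapses to the single scalar identity $\tau(p\wedge q^\perp)=\tau(p^\perp\wedge q)$, with the hypothesis $p\sim q$ supplying the input $\tau(p)=\tau(q)$, i.e. $\tau(p-q)=0$. The strategy is then to transcribe, essentially verbatim, the spectral argument just carried out for $M_n(\mathbb{C})$, replacing the matrix trace by $\tau$ and eigenvalue multiplicities by values of $\tau$ on spectral projections.

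Concretely, I would set $A=p-q$ and $B=p+q$, both selfadjoint elements of $\a$. A direct computation gives the two operator identities $A^2+(B-1)^2=1$ and $A(B-1)=-(B-1)A$. The first shows that on $N(B-1)$ one has $A^2=1$, hence $N(B-1)=N(A-1)\oplus N(A+1)=(p\wedge q^\perp)\oplus(p^\perp\wedge q)$; the anticommutation shows that $A$ commutes with $(B-1)^2$, hence with $|B-1|$ and with the spectral projection $P=P_{N(B-1)^\perp}\in\a$, so that $A$ is reduced by $N(B-1)$ and by its orthogonal complement. Taking the polar decomposition $B-1=V|B-1|$ inside the von Neumann algebra $\a$, the element $V$ is a symmetry of the reduced algebra $P\a P$, and substituting $B-1=V|B-1|$ into the anticommutation relation (using that $A$ commutes with $|B-1|$) yields $VAV=-A$ on $N(B-1)^\perp$.

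From here the conclusion is pure trace bookkeeping. Let $E$ denote the spectral measure of $A$ restricted to $N(B-1)^\perp$, whose spectrum lies in $(-1,1)$. The relation $VAV=-A$ gives $VE(S)V=E(-S)$ for every Borel set $S\subset(-1,1)$, and since $V$ is a symmetry and $\tau$ is a trace, $\tau(E(S))=\tau(VE(S)V)=\tau(E(-S))$; thus the scalar measure $S\mapsto\tau(E(S))$ is symmetric about the origin and $\tau(AP)=\int_{(-1,1)}\lambda\,d\tau(E(\lambda))=0$. Decomposing $A$ along $N(A-1)$, $N(A+1)$ and $N(B-1)^\perp$, one obtains $0=\tau(A)=\tau(p\wedge q^\perp)-\tau(p^\perp\wedge q)+\tau(AP)=\tau(p\wedge q^\perp)-\tau(p^\perp\wedge q)$. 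Hence the two projections have equal trace, so $p\wedge q^\perp\sim p^\perp\wedge q$, and Theorem \ref{teorema1} furnishes the normalized geodesic.

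I expect the only genuinely delicate point — as opposed to the ``essentially unaltered'' matrix manipulations — to be the passage from eigenvalue symmetry to the vanishing of $\tau(AP)$: this must be argued at the level of the spectral measure and the trace, justifying $\tau(VE(S)V)=\tau(E(S))$ by applying the trace property in the reduced finite factor $P\a P$, where $V$ is genuinely unitary, rather than by pairing off eigenvalues of finite multiplicity. Everything else is a direct copy of the displayed computation for $M_n(\mathbb{C})$, together with the standard fact that in a finite factor equality of traces is equivalent to Murray--von Neumann equivalence.
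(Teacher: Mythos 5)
Your proof is correct and follows essentially the same route as the paper: reduce to Theorem \ref{teorema1} by showing $\tau(p\wedge q^\perp)=\tau(p^\perp\wedge q)$, which is done by using the selfadjoint partial isometry $V$ from the polar decomposition of $B-1=p+q-1$ to conjugate $A=p-q$ to $-A$ on the generic part, so that the scalar spectral measure $\tau E$ is symmetric and $\tau(AP)=0$. The only cosmetic difference is that you derive the symmetry relation $VAV=-A$ directly from the identities $A^2+(B-1)^2=1$ and $A(B-1)=-(B-1)A$, whereas the paper invokes Chandler Davis's theorem for the existence of this symmetry.
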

\begin{proof}
Let $a=p-q$, and again note that $N(a-1)=R(p)\cap N(q)$. Following previous notations, $P_{N(a-1)}=p\wedge q^\perp =e_{10}$. Similarly, $P_{N(a+1)}=p^\perp\wedge q=e_{01}$. Therefore,
$P_{N(b-1)}=e_{10}+e_{01}:=e''$. Again, since $e_{10}$ and $e_{01}$ are eigenspaces of $a$, these projections reduce $a$, let $a_0$ be the reduction of $a$ to $R(e'')^\perp$. Then, we have that
$$
a=e_{10}-e_{01}+a_0.
$$
 The operator $a_0$ is a difference of projections: $a_0=p_0-q_0$, where $p_0$ and $q_0$ are the reductions of $p$ and $q$ to $R(e'')^\perp$, with $N(a_0\pm e_0)=\{0\}$ ($e_0$ is the identity in $R(e'')^\perp$). It was shown by Chandler Davis \cite{davis} that there exists a symmetry $v_0$ ($v_0^*=v_0$, $v_0^2=e_0$; namely,
$v_0$ is the isometric part in the polar decomposition of $b_0-e_0$) such that
$$
v_0a_0v_0=-a_0.
$$
Let $\mu$ be the projection-valued spectral measure of $a_0$:
$$
a_0=\int_{-1}^1 \lambda d\mu(\lambda).
$$
As in the above argument in $M_n(\mathbb{C})$, the existence of the symmetry $v_0$ implies the symmetry of the spectral measure of $a_0$ with respect to the origin: if $\Lambda\subset [-1,1]$ is a Borel subset, then
$$
\mu(-\Lambda)=v_0\mu(\Lambda)v_0.
$$
Then 
$$
\tau(a_0)=\int_{-1}^1 \lambda d\tau(\mu(\lambda))=0,
$$
because the function  $f(\lambda)=\lambda$ is odd and the measure $\tau \mu$ is symmetric with respect to the origin:
$$
\tau(\mu(-\Lambda))=\tau(v_0\mu(\Lambda)v_0)=\tau(e_0\mu(\Lambda))=\tau(\mu(\Lambda)),
$$
because $\mu\le e_0$.
Then, since $p\sim q$, 
$$
0=\tau(p)-\tau(q)=\tau(a)=\tau(e_{10}-e_{01}+a_0)=\tau(e_{10})-\tau(e_{01}),
$$
so that  $\tau(p\wedge q^\perp)=\tau(p^\perp\wedge q)$, i.e., $p\wedge q^\perp\sim p^\perp\wedge q$. Therefore, by Theorem \ref{teorema1}, there exists a (minimal) normalized geodesic joining $p$ and $q$ in $\p_\a$.
\end{proof}
\begin{rem}
In \cite{int}, it was shown that in a finite algebra with faithful trace $\tau$, the geodesics have minimal length also when measured with the $\rho$ norms $\|\ \|\rho$ of the trace, for $\rho\ge 2$ ($\|x\|_\rho=(\tau(x^*x)^{\rho/2})^{1/\rho}$). Namely, it was shown that if $\delta(t)=e^{t\zz}pe^{-t\zz}$ is a normalized geodesic ($\|\zz\|\le\pi/2$)  with $\delta(1)=q$, and $\gamma$ is any other smooth curve in $\p_\a$ with $\gamma(t_0)=p$ and $\gamma(t_1)=q$, then
$$
\ell_\rho(\gamma):=\int_{t_0}^{t_1} \|\dot{\gamma}(t)\|_\rho dt \ge \ell_\rho(\delta)=\|\zz\|_\rho.
$$
\end{rem}
As we have seen, on finite factors, a version of the the Hopf-Rinow is valid in $\p_\a$, and the geodesics are minimal for the usual norm of $\a$ at every tangent space. 
However, as a consequence of the fact in the above remark, we have that for the $p$-norms in the tangent space, including the pseudo-Riemannian case $p=2$, there are no {\it normal} neighbourhoods if $\a$ is a type $II_1$ factor. Indeed, for $2\le\rho<\infty$, denote by 
$$
d_\rho(p,q)=\inf\{\ell_\rho(\gamma): \gamma \hbox{ is smooth and joins } p \hbox{ and } q \hbox{ in } \p_\a \}
$$
the metric induced in $\p_\a$ by the $\rho$-norm.
\begin{prop}
Let $\a$ a type $II_1$ factor and $2\le\rho<\infty$ Then there exist pairs of projections in $\p_\a$, which are arbitrarily close for the $d_\rho$ metric, which can be joined by infinitely many geodesics.
\end{prop}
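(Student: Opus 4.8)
The plan is to exploit the two mechanisms already isolated in this section. By Theorem~\ref{teorema1}, a pair $p,q$ admits infinitely many geodesics exactly when $p\wedge q^\perp\sim p^\perp\wedge q\neq 0$; and by the preceding remark, whenever $p,q$ are joined by a normalized geodesic $\delta(t)=e^{t\xx}pe^{-t\xx}$ its $d_\rho$-distance equals the $\rho$-length $\|\xx\|_\rho$ of the exponent. So it suffices to produce, for each small $s>0$, a pair whose only nontrivial block $e_{10}+e_{01}$ has small trace: the operator norm of the exponent is pinned at $\pi/2$ by that block, while its $\rho$-norm is weighted by the trace of the block and hence shrinks with $s$.

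Concretely, I would first use that $\a$ is of type $II_1$ to pick, for any $0<s\le 1/2$, two mutually orthogonal projections $p,q\in\p_\a$ with $\tau(p)=\tau(q)=s$; since $\a$ is a factor and the traces coincide, $p\sim q$. Orthogonality gives $p\le q^\perp$ and $q\le p^\perp$, so that $p\wedge q^\perp=p$ and $p^\perp\wedge q=q$, both nonzero and equivalent. Hence Theorem~\ref{teorema1} yields infinitely many normalized geodesics: running through the existence argument, the generic block $e_0$ vanishes and the diagonal blocks carry zero exponent, so each geodesic has exponent $\xx=i\frac{\pi}{2}(w+w^*)$, with $w$ ranging over the partial isometries satisfying $w^*w=p$, $ww^*=q$. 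These form a coset of the unitary group of the corner $p\a p$ --- itself a type $II_1$ factor, hence with a continuum of unitaries --- and distinct $w$ give distinct exponents since $w=(w+w^*)p$.

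Next I would record the common $\rho$-length. Using $w^2=(w^*)^2=0$ one finds $\xx^*\xx=\frac{\pi^2}{4}(w^*w+ww^*)=\frac{\pi^2}{4}(p+q)$, so $(\xx^*\xx)^{\rho/2}=(\pi/2)^{\rho}(p+q)$ and therefore
$$
d_\rho(p,q)=\|\xx\|_\rho=\Big(\tau\big((\xx^*\xx)^{\rho/2}\big)\Big)^{1/\rho}=\frac{\pi}{2}\,(2s)^{1/\rho}\longrightarrow 0\quad(s\to 0),
$$
while the family of geodesics joining $p$ and $q$ remains infinite for every fixed $s>0$. This proves the proposition.

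There is no serious obstacle here; the only points demanding care are confirming that the generic and diagonal blocks vanish for this particular pair --- so that the minimizing exponent really is $\xx=i\frac{\pi}{2}(w+w^*)$ and $d_\rho(p,q)$ equals $\|\xx\|_\rho$ exactly, rather than merely being bounded by it --- and the appeal to the preceding remark to identify $d_\rho$ with the $\rho$-length of a normalized geodesic. The substance of the statement is precisely the discrepancy between the operator norm of the exponent, fixed at $\pi/2$, and its $\rho$-norm, which is damped by the vanishing trace $2s$ of its support.
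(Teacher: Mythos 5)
Your proposal is correct and follows essentially the same route as the paper: orthogonal projections $p\perp q$ of equal small trace, the family of exponents $\xx=i\frac{\pi}{2}(w+w^*)$ indexed by the partial isometries $w$ implementing $p\sim q$, and the computation $\|\xx\|_\rho=\frac{\pi}{2}(2s)^{1/\rho}\to 0$. If anything, your bookkeeping of the Halmos blocks is slightly more careful than the paper's (which misprints $e_{00}=p^\perp\wedge q^\perp$ as $0$, though it is $1-p-q$ when $s<1/2$; this block carries zero exponent, so nothing changes), and your appeal to the remark identifying $d_\rho(p,q)$ with $\|\xx\|_\rho$ is more than is needed, since the upper bound $d_\rho(p,q)\le\|\xx\|_\rho$ already gives arbitrary closeness.
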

\begin{proof}
Given $0<r\le\frac12$, let $p\in\p_\a$ such that $\tau(p)=r$. Let $q\in\p_\a$ such that $q\le p^\perp$ and $\tau(q)=r$ (consider the reduced factor $p^\perp\a p^\perp$, which is also of type $II_1$, and pick there a projection $q$ with (renormalized) trace   $\frac{r}{1-r}$). Then, the Halmos decomposition given by $p$ and $q$ yields (following the notation of the preceding section)
$$
e_{00}=0 , \ e_{11}=0 , \ e_{10}=p , \ e_{10}=p , \ e_{01}=q , \ e_0=0.
$$
Since $p\sim q$, there exist infinitely many $v\in\a$ such that $v^*v=p$ and $vv^*=q$. 
Any of these $v$ provides a geodesic joining $p$ and $q$, given by (see the last part of the proof of Theorem \ref{teorema1}) the exponent $\xx=i\frac{\pi}{2}(v+v^*)$.
The length of any of these geodesics is
$$
\|\xx\|_\rho=\tau((\xx^*\xx)^{\rho/2})^{1/\rho}=\frac{\pi}{2}\tau((v^*v+vv^*)^{\rho/2})^{1/\rho}=\frac{\pi}{2}2^{1/\rho}\tau(p)^{1/\rho}=\pi 2^{1/\rho-1} r^{1/\rho}.
$$
\end{proof}

\section{Applications to finite index subfactors}

V.F.R. Jones  introduced the  theory of index for subfactors of a  {\bf II}$_1$ factor in \cite{jones}. An inclusion $\n\subset\m$ of {\bf II}$_1$ factors is said to be of {\it finite index} if the relative dimension (or coupling constant) 
$$
[\m:\n]:=\dim_\n(L^2(\m,\tau))=\tt^{-1}
$$
is finite (see \cite{murray vn}). A sequence of projections arises in this circumstance, by means of Jones' {\it basic cosntruction}. Denote by $\tau$ the normalized trace of $\m$ (and of the subsequent finite extensions which will be considered). Let $e_\n$ be the orthogonal projection of $L^2(\m,\tau)$ onto $L^2(\n,\tau)$. This projection, restricted to $\m\subset L^2(\m,\tau)$, induces   the unique trace invariant conditional expectation $E_\n:\m\to\n$. Jones proved that the von Neumann algebra $\m_1=<\m,e_\n>$ generated in $\b(L^2(\m,\tau))$ by $\m$ and $e_\n$ is again a {\bf II}$_1$ factor, and that the inclusion $\m\subset\m_1$ has finite index, with $[\m_1:\m]=[\m:\n]$. Thus, iterating the basic construction,  a sequence of orthogonal projections arises:  $e_1=e_\n, e_2=e_\m, \dots$. We shall be concerned only with the first two. These projections recover the index:
$$
\tau(e_\n)=\tau(e_\m)=\tt=[\m:\n]^{-1}.
$$
In particular, they are unitarily equivalent in any factor of the tower of factors enabled by the basic construction, in which both lie. More precisely, Jones proved (\cite{jones}, Proposition 3.4.1) that
$$
e_\n e_\m e_\n=\tt e_\n \ \hbox{ and } \ e_\n^\perp\wedge e_\m=\ e_\n\wedge e_\m^\perp=0.
$$
It follows that $e_\n$ and $e_\m$ can be joined by a unique geodesic which lies in $\m_2=<\m, e_\n, e_\m>$. Thus, the finite index inclusion $\n\subset\m$ gives rise to a unique element $\zz_{\m,\n}\in\m_2$, the exponent of this geodesic:
$$
\zz_{\m,\n}^*=-\zz_{\m,\n} \ , \ d(e_\n,e_\m)=\|\zz_{\m,\n}\|\le\pi/2 \ , \zz_{\m,\n} \hbox{ is } e_\n \hbox{ and } e_\m \hbox{ co-diagonal,}
$$
and
$$
 e^{\zz_{\m,\n}}e_\n e^{-\zz_{\m,\n}}=e_\m.
$$
The index $[\m:\n]=\tt^{-1}$ is related to the geodesic distance between $e_\n$ and $e_\m$, meaured with the usual norm of $\m_2$, or with the $\rho$-norms ($1\le\rho<\infty$):
\begin{teo}
With the above notations,
$$
d(e_\n,e_\m)=\|\zz_{\m,\n}\|=\arccos(\tt^{1/2}) \ \hbox{ and } \ d_\rho(e_\n,e_\m)=\|\zz_{\m,\n}\|_\rho=\tt^{1/\rho} \arccos(\tt^{1/2}).  
$$
\end{teo}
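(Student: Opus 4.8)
The plan is to reduce both assertions to computing the operator norm and the $\rho$-norm of the single anti-Hermitian element $\zz_{\m,\n}$. Jones' relations $e_\n^\perp\wedge e_\m=e_\n\wedge e_\m^\perp=0$ say exactly that $e_\n$ and $e_\m$ are in generic position, so Theorem \ref{teorema1} guarantees a unique normalized geodesic $\delta(t)=e^{t\zz_{\m,\n}}e_\n e^{-t\zz_{\m,\n}}$ with $\delta(1)=e_\m$; by the minimality results recalled in Section 2 and in the Remark above, $d(e_\n,e_\m)=\|\zz_{\m,\n}\|$ and $d_\rho(e_\n,e_\m)=\|\zz_{\m,\n}\|_\rho$. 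Thus it suffices to compute these two norms.

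For the operator norm I would use the Halmos model from the proof of Theorem \ref{teorema1}: on the generic part there are a Hilbert space $\l$ and a positive operator $X$ with $\|X\|\le\pi/2$ carrying $e_\n$ to $\left(\begin{array}{cc}1&0\\0&0\end{array}\right)$ and $e_\m$ to $\left(\begin{array}{cc}\cos^2(X)&\cos(X)\sin(X)\\ \cos(X)\sin(X)&\sin^2(X)\end{array}\right)$, with exponent $Z=\left(\begin{array}{cc}0&X\\-X&0\end{array}\right)$. The crucial input is the Jones relation $e_\n e_\m e_\n=\tt e_\n$: in the model its left-hand side is $\left(\begin{array}{cc}\cos^2(X)&0\\0&0\end{array}\right)$, so the relation forces $\cos^2(X)=\tt\,1$. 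Since $X\ge 0$ and $\|X\|\le\pi/2$, this means $X=\arccos(\tt^{1/2})\,1$ is a scalar. Hence $\|\zz_{\m,\n}\|=\|X\|=\arccos(\tt^{1/2})$, and $d(e_\n,e_\m)=\arccos(\tt^{1/2})$.

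For the $\rho$-norm, the scalarity of $X$ is decisive: it gives $\zz_{\m,\n}=\arccos(\tt^{1/2})\,w$ with $w$ a partial isometry, $w^*w=ww^*=e$, where $e$ is the support projection of $\zz_{\m,\n}$; therefore $|\zz_{\m,\n}|=\arccos(\tt^{1/2})\,e$ and $\|\zz_{\m,\n}\|_\rho=\tau(|\zz_{\m,\n}|^\rho)^{1/\rho}=\arccos(\tt^{1/2})\,\tau(e)^{1/\rho}$. The entire $\rho$-dependence is thus carried by $\tau(e)$, and the plan is to show $\tau(e)=\tt$, using $e_\n e_\m e_\n=\tt e_\n$ to evaluate $\zz_{\m,\n}^*\zz_{\m,\n}$: its $e_\n$-corner equals $\arccos(\tt^{1/2})^2 e_\n$, of trace $\arccos(\tt^{1/2})^2\,\tt$. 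Granting $\tau(e)=\tt$, one obtains $\|\zz_{\m,\n}\|_\rho=\tt^{1/\rho}\arccos(\tt^{1/2})$, as stated.

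The main obstacle is exactly this last trace computation. The element $\zz_{\m,\n}^*\zz_{\m,\n}$ is diagonal with respect to the decomposition $e_\n,e_\n^\perp$, and one must identify the support $e$ of $\zz_{\m,\n}$ and compute $\tau(e)$ carefully from the Halmos decomposition together with $\tau(e_\n)=\tau(e_\m)=\tt$ and the Jones relation; this bookkeeping of the two diagonal corners is the delicate point on which the stated $\rho$-norm value rests. Everything else—the existence and uniqueness of the geodesic, the identification of $d$ and $d_\rho$ with norms of $\zz_{\m,\n}$, and the scalar nature of $X$—is a direct application of Theorem \ref{teorema1} and the minimality results already in place.
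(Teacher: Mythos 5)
Your first two paragraphs reproduce the paper's own argument essentially verbatim: Jones' relations put $e_\n$ and $e_\m$ in generic position (note also $e_\n\wedge e_\m=0$, since $e_\n e_\m e_\n=\tt e_\n$ with $\tt<1$ kills any vector in $R(e_\n)\cap R(e_\m)$), Theorem \ref{teorema1} together with the minimality results of Section 2 identifies $d$ and $d_\rho$ with $\|\zz_{\m,\n}\|$ and $\|\zz_{\m,\n}\|_\rho$, and in the Halmos model the relation $\tt P_\n=P_\n P_\m P_\n$ forces $X=\arccos(\tt^{1/2})1_\l$, giving $d(e_\n,e_\m)=\|\zz_{\m,\n}\|=\arccos(\tt^{1/2})$. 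All of this is correct and is exactly how the paper proceeds.

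The step you leave open --- ``show $\tau(e)=\tt$'' --- is a genuine gap, and it cannot be closed as planned, because the claim is false. The exponent $\zz=\zz_{\m,\n}$ vanishes on $e_\n^\perp\wedge e_\m^\perp$ and corresponds on the generic part to $Z$, with $Z^*Z=X^2\oplus X^2=(\arccos(\tt^{1/2}))^2\,1_{\l\times\l}$; this is invertible on $\l\times\l$, so the support projection of $\zz$ is the \emph{whole} generic part $e=(e_\n^\perp\wedge e_\m^\perp)^\perp=e_\n\vee e_\m$, of which $e_\n$ is only one of the two diagonal corners. In the finite factor where the computation takes place, $\tau(e_\n\vee e_\m)=\tau(e_\n)+\tau(e_\m)-\tau(e_\n\wedge e_\m)=2\tt$. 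Hence $\zz^*\zz=(\arccos(\tt^{1/2}))^2(e_\n\vee e_\m)$, and your own (correct) reduction gives $\|\zz\|_\rho=(2\tt)^{1/\rho}\arccos(\tt^{1/2})$: the two corners $e_\n\zz^*\zz e_\n$ and $e_\n^\perp\zz^*\zz e_\n^\perp$ each contribute trace $\tt(\arccos(\tt^{1/2}))^2$, not only the $e_\n$-corner you evaluated. Your instinct that this bookkeeping is the delicate point is well placed: at exactly this spot the paper's proof passes from the displayed identity $Z^*Z=(\arccos(\tt^{1/2}))^2\cdot 1_{\l\times\l}$ to the assertion $\zz^*\zz=(\arccos(\tt^{1/2}))^2 e_\n$, i.e., it identifies the identity of the generic part with $e_\n$ rather than with $e_\n\vee e_\m$. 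Carried out consistently, the computation yields $d_\rho(e_\n,e_\m)=(2\tt)^{1/\rho}\arccos(\tt^{1/2})$, so the $\rho$-norm constant in the statement appears to be off by the factor $2^{1/\rho}$; the operator-norm assertion, and everything in your first two paragraphs, is unaffected.
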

\begin{proof}
The projections $e_\n$ and $e_\m$ act in $L^2(\m_1,\tau)$. Denote by $e_\n'$ and $e_\m'$ the generic part of these projections, acting on the Hilbet space $\h'\subset L^2(\m_1,\tau)$. By Halmos' theorem, there exists an isometric isomorphism between $\h'$ and $\l\times\l$ such that $e_\n'$ and $e_\m'$ are carried, respectively, onto
$$
P_\n=\left( \begin{array}{cc} 1 & 0 \\ 0 & 0 \end{array} \right) \ \hbox{ and } \ P_\m=\left( \begin{array}{cc} \cos^2(X) & \cos(X)\sin(X) \\ \cos(X)\sin(X) & \sin^2(X) \end{array} \right),
$$
where $0\le X\le \pi/2$. Note that since the only (non trivial) non generic part of $e_\n$ and $e_\m$ is $e_\n^\perp\wedge e_\m^\perp$, on which both $e_\n$ and $e_\m$ act trivially, we have that $e_\n' e_\m' e_\n'=e_\n e_\m e_\n=\tt e_\n$. Therefore,
$$
\tt \left( \begin{array}{cc} 1 & 0 \\ 0 & 0 \end{array} \right)=\tt P_\n=P_\n P_\m P_\n=\left( \begin{array}{cc} \cos^2(X) & \cos(X)\sin(X) \\ \cos(X)\sin(X) & \sin^2(X)
 \end{array} \right) ,
$$ 
i.e., $\cos(X)=\tt^{1/2} 1_\l$, and therefore $X$ is a scalar multiple of the identity in $\l$: $X=\arccos(\tt^{1/2}) 1_\l$. The unique exponent $\zz=\zz_{\m,\n}$ of the geodesic joining $e_\n$ and $e_\m$, is zero on the non generic part $e_\n^\perp\wedge e_\m^\perp$, and in the generic part is related (via the Halmos' isomorphism) to the operator
$$
Z=\left( \begin{array}{cc} 0 & X \\ -X & 0 \end{array} \right).
$$
Then $\zz^*\zz$ corresponds to
$$
Z^*=\left( \begin{array}{cc} X^2 & 0 \\ 0 & X^2 \end{array} \right)=(\arccos(\tt^{1/2}))^2 \left( \begin{array}{cc} 1 & 0 \\ 0 & 1 \end{array} \right).
$$
Therefore $\zz^*\zz=(\arccos(\tt^{1/2}))^2e_\n$. The geodesic distance induced by the usual operator norm is
$$
d(e_\n,e_\m)=\|\zz\|=\|\zz^*\zz\|^{1/2}=\arccos(\tt^{1/2}),
$$
and the one induced by the $\rho$ norm is
$$
d_\rho(e_\n,e_\m)=\|\zz\|_\rho=\arccos(\tt^{1/2})(\tau(e_\n))^{1/\rho}=\tt^{1/\rho}\arccos(\tt^{1/2}).
$$
\end{proof}

Next, we consider the case of two projections arising from two subfactors $\n_0, \n_1\subset \m$. These give rise to two orthogonal projections  $e_0,e_1$ in $\b(L^2(\m,\tau))$. 
We make the assumption that both inclusions have finite index: $[\m:\n_0], [\m:\n_1]<\infty$. 
If both projections lie in the same {\bf II}$_1$ factor $\m_0\supset\m$ (with trace $\tau$ extending the trace of $\m$), then a necessary and sufficient condition for the existence of a geodesic joining $e_0$ and $e_1$ is $\tau(e_0)=\tau(e_1)$.

\begin{lem}
Let $\n_0,\n_1\subset\m$ be finite index subfactors. Then there exists a {\bf II}$_1$ factor $\m_0$ such that $\m\subset\m_0$ has finite index, and $e_0,e_1\in\m_0$.
\end{lem}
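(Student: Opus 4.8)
The natural candidate for $\m_0$ is $\langle\m,e_0,e_1\rangle$, the von Neumann algebra generated in $\b(L^2(\m,\tau))$ by $\m$ and the two Jones projections. It contains $\m$, $e_0$ and $e_1$ by construction, so the entire content of the lemma is that this algebra is a {\bf II}$_1$ factor with $[\m_0:\m]<\infty$. The plan is to identify $\m_0$ through its commutant and thereby reduce both assertions to a single subfactor of $\m$.

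First I would compute $\m_0'$. Let $J$ denote the modular conjugation of $(\m,\tau)$ on $L^2(\m,\tau)$. The basic construction gives $\langle\m,e_i\rangle'=J\n_iJ$ for $i=0,1$, and since $\m_0=\m\vee\{e_0\}''\vee\{e_1\}''$ one has $\m_0'=\m'\cap\{e_0\}'\cap\{e_1\}'$. Each Jones projection is $J$-invariant ($Je_iJ=e_i$, since $L^2(\n_i,\tau)$ is $*$-stable), and the relation $e_ixe_i=E_{\n_i}(x)e_i$ for $x\in\m$ yields $\m\cap\{e_i\}'=\n_i$. Putting these together,
$$
\m_0'=J\n_0J\cap\{e_1\}'=J(\n_0\cap\n_1)J, \qquad \hbox{ so } \qquad \m_0=\bigl(J(\n_0\cap\n_1)J\bigr)'.
$$

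Write $\q=\n_0\cap\n_1\subset\m$. The identity above says exactly that $\m_0$ is the basic construction of the inclusion $\q\subset\m$: if $\q$ is a {\bf II}$_1$ subfactor of finite index, then $\m_0=\langle\m,e_\q\rangle$ is automatically a {\bf II}$_1$ factor, the trace $\tau$ extends to it, and $[\m_0:\m]=[\m:\q]<\infty$, which is precisely the conclusion. Thus the whole statement is reduced to showing that the intersection $\q=\n_0\cap\n_1$ of the two given finite-index subfactors is again a finite-index subfactor of $\m$.

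This last reduction is where I expect the main difficulty to lie. Neither the factoriality nor the finiteness of the index of $\n_0\cap\n_1$ is formal for an arbitrary pair of finite-index subfactors, since $\n_0$ and $\n_1$ may occupy a poorly controlled relative position; the argument must therefore use the finiteness of $[\m:\n_0]$ and $[\m:\n_1]$ in an essential, quantitative way. Concretely, the step I would try to carry out is to construct the trace-preserving conditional expectation $E:\m_0\to\m$ and to bound its Pimsner--Popa (Watatani) index, equivalently to assemble a finite right $\m$-basis of $\m_0$ from bases of $\langle\m,e_0\rangle$ and $\langle\m,e_1\rangle$; a finite such basis forces $[\m:\q]<\infty$ and the factoriality of $\q$, and closes the proof. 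Controlling this relative position is the crux on which the lemma turns.
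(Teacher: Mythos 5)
Your commutant computation is correct, and it is a genuinely illuminating one: $\langle\m,e_0,e_1\rangle'=\m'\cap\{e_0\}'\cap\{e_1\}'=J\n_0J\cap J\n_1J=J(\n_0\cap\n_1)J$, so the algebra generated on $L^2(\m,\tau)$ by $\m$ and the two Jones projections is exactly the basic construction of $\q=\n_0\cap\n_1\subset\m$. The gap is the step you defer to the end: that $\q$ is again a finite-index subfactor. This is not merely the hard part of the argument --- it is false in general, so no assembly of Pimsner--Popa bases can close it. Counterexample: let the infinite dihedral group $D_\infty=\mathbb{Z}_2\ast\mathbb{Z}_2=\langle\alpha_0,\alpha_1\rangle$ act on the hyperfinite {\bf II}$_1$ factor $R$ by Bernoulli shifts (a free, mixing action), and put $\m=R$, $\n_i=R^{\alpha_i}$. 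Each $\n_i$ has index $2$, but $\n_0\cap\n_1=R^{\langle\alpha_0,\alpha_1\rangle}=R^{D_\infty}=\CC 1$, because a mixing action of an infinite group is ergodic. Your candidate is then $\m_0=(J\,\CC 1\,J)'=\b(L^2(\m))$, a type {\bf I}$_\infty$ factor; in particular any von Neumann algebra on $L^2(\m)$ containing $\m\cup\{e_0,e_1\}$ must contain its weak closure, hence equals $\b(L^2(\m))$, and so cannot be a {\bf II}$_1$ factor. The approach of staying inside $\b(L^2(\m))$ is therefore unworkable, not just incomplete.

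The paper's proof is built precisely so that the intersection $\n_0\cap\n_1$ never appears. It first forms $\m_1=\langle\m,e_0\rangle$, the basic construction of $\n_0\subset\m$, a {\bf II}$_1$ factor with $[\m_1:\m]=[\m:\n_0]$; it then composes expectations, $F_1=E_1\circ F:\m_1\to\n_1$ (where $F:\m_1\to\m$ is the trace-preserving expectation), to see that $\n_1\subset\m_1$ is a finite-index inclusion, of index $[\m_1:\m][\m:\n_1]$ by multiplicativity; finally it sets $\m_0=\langle\m_1,f_1\rangle\subset\b(L^2(\m_1))$, the basic construction of $\n_1\subset\m_1$, which is automatically a {\bf II}$_1$ factor of finite index over $\m$. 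The projection $f_1$ agrees with $e_1$ on $L^2(\m)\subset L^2(\m_1)$ and vanishes on its orthogonal complement, and this is the sense in which ``$e_1\in\m_0$'' is asserted. Note that this realization of $e_1$ lives on a Hilbert space strictly larger than the one your candidate uses; your computation together with the counterexample above shows this enlargement is unavoidable, since read literally on $L^2(\m)$ the conclusion of the lemma can fail. The structural difference between the two arguments is that the paper's $\m_0$ is reached by two successive basic constructions of single finite-index inclusions, each of which has finite index for formal (multiplicativity) reasons, whereas your route forces you to control the relative position of $\n_0$ and $\n_1$ --- exactly the quantity that can degenerate.
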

\begin{proof}
Let $E_i:\m\to\n_i$, $i=0,1$, be the unique trace preserving conditional expectations, giving rise to the  orthogonal projections $e_0, e_1$. Let $\m_1=<\m, e_0>$, and $F:\m_1\to\m$ the corresponding expectation. Note that $F_1=E_1F:\m_1\to\n_1$ is a conditional expectation, which is trace invariant (for the trace of $\m_1$), and which corresponds to the finite index inclusion $\n_1\subset\m_1$: $[\m_1:\n_1]=[\m_1:\m][\m,\n_1]$. Let $f_1$ be the orthogonal projection in $\b(L^2(\m_1))$ induced by this inclusion, and $\m_0=<\m_1,f_1>$, which is a finite factor with 
$$
[\m_0:\m_1]=[\m_1:\m][\m,\n_1]<\infty.
$$
We claim that $f_1=e_1$. Denote by $[x]$ the element $x\in\m_1$ regarded as a vector in $L^2(\m_1)$. Then, if $x\in\m$,
$$
f_1([x])=[F_1(x)]=[E_1(F(x))]=[E_1(x)]=e_1([x]).
$$
If $\xi\in\m^\perp$, then $f_1(\xi)=e_1(\xi)=0$.
\end{proof}
\begin{rem}
Note that $\m_0\subset \{\m, e_0, e_1\}''\subset \b(L^2(\m_1))$. However, $\m$, $e_0$ and $e_1$ act also on $L^2(\m)$. Thus, the algebra $\{\m, e_0, e_1\}''\subset \b(L^2(\m_1))$ is $*$-isomomorphic (by means of a normal isomorphism, given by restriction to $L^2(\m)$) to the von Neumann {\bf II}$_1$ factor $\m_{1,2}:=<\m,e_0,e_1>\subset\b(L^2(\m))$.
\end{rem}
\begin{prop}
Let $\n_0,\n_1\subset\m$ be finite index subfactors. Then there exists a geodesic joining $e_0$ and $e_1$ if and only if $[\m:\n_0]=[\m:\n_1]$.
\end{prop}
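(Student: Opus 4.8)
The plan is to reduce everything to the finite-factor setting furnished by the preceding Lemma and then read off the equivalence from Theorem \ref{teorema1} together with the trace computation already used in Section 4. By the Lemma there is a {\bf II}$_1$ factor $\m_0$ with $\m\subset\m_0$ of finite index and $e_0,e_1\in\m_0$; let $\tau$ denote its (unique, faithful, normal, normalized) trace, which extends the trace of $\m$. Since $\m_0$ is a finite factor, two projections in $\m_0$ are Murray--von Neumann equivalent if and only if they have the same $\tau$-trace. Hence, applying Theorem \ref{teorema1} to $e_0,e_1\in\p_{\m_0}$, a geodesic joining $e_0$ and $e_1$ exists if and only if $e_0\wedge e_1^\perp\sim e_0^\perp\wedge e_1$, and in $\m_0$ this is equivalent to
$$
\tau(e_0\wedge e_1^\perp)=\tau(e_0^\perp\wedge e_1).
$$

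Next I would invoke the trace identity established inside the proof of the Hopf--Rinow theorem of Section 4: for any projections $p,q$ in a finite factor, writing $a=p-q$ and decomposing $a=e_{10}-e_{01}+a_0$ with $\tau(a_0)=0$ (the spectral measure of the generic part being symmetric about the origin), one obtains $\tau(p)-\tau(q)=\tau(p\wedge q^\perp)-\tau(p^\perp\wedge q)$. Applying this with $p=e_0$ and $q=e_1$ turns the condition $\tau(e_0\wedge e_1^\perp)=\tau(e_0^\perp\wedge e_1)$ into the much simpler statement
$$
\tau(e_0)=\tau(e_1).
$$

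It then remains to identify these two traces with the reciprocals of the indices, i.e.\ to use that a Jones projection satisfies $\tau(e_i)=[\m:\n_i]^{-1}$. The point requiring care --- and the step I expect to be the main (if modest) obstacle --- is that this value must be computed with respect to the trace of the ambient factor $\m_0$, which is a priori larger than the basic construction $\langle\m,e_i\rangle$ in which the identity $\tau(e_i)=[\m:\n_i]^{-1}$ is originally proved. This is resolved by uniqueness of the trace on a {\bf II}$_1$ factor: the normalized trace of $\m_0$ restricts to the normalized trace on the subfactor $\langle\m,e_i\rangle$, so the value of $\tau(e_i)$ is unchanged. With this in hand, $\tau(e_0)=\tau(e_1)$ reads $[\m:\n_0]^{-1}=[\m:\n_1]^{-1}$, which is exactly $[\m:\n_0]=[\m:\n_1]$, and both implications follow at once.
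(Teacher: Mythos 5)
Your proposal is correct and takes essentially the same route as the paper: the paper's one-line proof also reduces the statement to the equality $\tau_{\m_0}(e_0)=\tau_{\m_0}(e_1)$ in the common finite factor $\m_0$ provided by the Lemma, relying on the Section 4 theorem (equal trace $\Leftrightarrow$ geodesic, since a geodesic endpoint pair is unitarily equivalent) and on the identification $\tau_{\m_0}(e_i)=[\m:\n_i]^{-1}$. Your unpacking of both directions through Theorem \ref{teorema1} together with the trace identity $\tau(p)-\tau(q)=\tau(p\wedge q^\perp)-\tau(p^\perp\wedge q)$ from the proof of the Hopf--Rinow theorem, and your justification of the trace computation via uniqueness of the normalized trace on a {\bf II}$_1$ factor, are exactly the details the paper leaves implicit.
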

\begin{proof}
$[\m:\n_0]=[\m:\n_1]=\tt^{-1}$ if and only if $\tau_{\m_0}(e_0)=\tau_{\m_0}(e_1)=\tt$.
\end{proof}
With the same notations as above, we have the following:
\begin{lem} 
Suppose that $\|e_0-e_1\|<1$, and let $\delta(t)=e^{tz}e_0e^{-tz}$, $t\in[0,1]$, be the unique geodesic of $\p_{\m_0}$ joining $\delta(0)=e_0$ and $\delta(1)=e_1$. Then 
$$
E_t=\delta_t|_{\m}:\m\to \n_t:=e^{tz}\n_0e^{-tz}\subset\m
$$
is a pointwise smooth path of conditional expectations joining $E_{\n_0}$ and $E_{\n_1}$ (i.e., the map $[0,1]\ni t\mapsto E_t(a)\in\m$ is $C^1$ for all $a\in\m$).
\end{lem}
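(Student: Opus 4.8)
The plan is to show that every point $\delta_t=e^{tz}e_0e^{-tz}$ of the geodesic is itself a Jones projection, namely the projection implementing the $\tau$-preserving conditional expectation onto the rotated subfactor $\n_t=e^{tz}\n_0e^{-tz}$, and then to read off $E_t$, its endpoints, and its smoothness. Throughout I work on $L^2(\m,\tau)$ (using the preceding Remark to realize $\m_0$ inside $\b(L^2(\m,\tau))$), writing $\Omega=\hat 1$ for the trace vector and $J$ for the modular conjugation. Two soft facts come first. Since $e_0=e_{\n_0}$ and $e_1=e_{\n_1}$ fix $\Omega$, the vector $\Omega$ lies in $R(e_0\wedge e_1)$, where the exponent $z$ vanishes (in the construction of Theorem \ref{teorema1} the part $\xx'$ of the exponent on $R(e_0\wedge e_1)\oplus R(e_0^\perp\wedge e_1^\perp)$ is $0$, and $\|e_0-e_1\|<1$ forces $e_0\wedge e_1^\perp=e_0^\perp\wedge e_1=0$). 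Hence $z\Omega=0$, so $e^{tz}\Omega=\Omega$ and $\delta_t\Omega=\Omega$. Secondly, since $Je_0J=e_0$ and $Je_1J=e_1$, the element $JzJ$ is again an $e_0$-co-diagonal anti-Hermitian logarithm of norm $\le\pi/2$ taking $e_0$ to $e_1$; by the uniqueness in Theorem \ref{teorema1} this gives $JzJ=z$, whence $Je^{tz}J=e^{tz}$ and $J\delta_tJ=\delta_t$.

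Next I reformulate the assertion. Because $e^{tz}\Omega=\Omega$, the range of $\delta_t$ is
$$
R(\delta_t)=e^{tz}R(e_0)=e^{tz}\overline{\n_0\Omega}=\overline{\,e^{tz}\n_0e^{-tz}\,\Omega\,}=\overline{\n_t\Omega},
$$
and $\Omega$ is a cyclic trace vector for $\n_t=e^{tz}\n_0e^{-tz}$. Thus, abstractly, $\delta_t$ is exactly the Jones projection of the von Neumann algebra $\n_t$ represented in standard form on $\overline{\n_t\Omega}$. The whole content of the Lemma is therefore the \emph{containment} $\n_t\subseteq\m$: granting it, $\delta_t=e_{\n_t}$ in the sense of the basic construction, the orthogonal projection $\delta_t$ restricted to $\m$ is precisely the $\tau$-preserving conditional expectation $E_t=E_{\n_t}\colon\m\to\n_t$ (one has $\delta_t(a\Omega)=E_t(a)\Omega$ and the Jones relation $\delta_t a\delta_t=E_t(a)\delta_t$ for $a\in\m$), and the endpoints are $E_0=e_0|_\m=E_{\n_0}$ and $E_1=e_1|_\m=E_{\n_1}$.

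The \textbf{main obstacle} is precisely this containment $\n_t\subseteq\m$, equivalently $\delta_t\m\delta_t\subseteq\m\delta_t$, equivalently that $\mathrm{Ad}(e^{tz})$ normalizes $\m$ (i.e. $[z,\m]\subseteq\m$). This is the one step that does \emph{not} follow from the formal properties $\delta_t\Omega=\Omega$ and $J\delta_tJ=\delta_t$ alone: a projection satisfying only these need not be the Jones projection of any subalgebra of $\m$, so one must genuinely use that $e_0,e_1$ are Jones projections, through the pull-down relations $e_i\, a\, e_i=E_{\n_i}(a)e_i$ for $a\in\m$, together with $z\in\langle\m,e_0,e_1\rangle$ and the co-diagonality $\{2e_0-1,z\}=0$. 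I would attack it by differentiating $c(t)=e^{tz}xe^{-tz}$ for $x\in\n_0$, so that $\dot c(t)=[z,c(t)]$, and establishing the normalizer statement: writing $z=e_0ze_0^\perp+e_0^\perp ze_0$ and expanding the off-diagonal blocks of $z$ through the generators $e_0,e_1$, the relations $e_i\m e_i\subseteq\m e_i$ should force the $e_i$-terms occurring in $zx-xz$ to recombine into an element of $\m$. Controlling the block $e_0ze_0^\perp$ as an operator built out of $\m$ and the two Jones projections is the technical heart of the argument, and the only non-formal point.

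Finally, smoothness is immediate once the containment is in hand. The map $t\mapsto\delta_t=e^{tz}e_0e^{-tz}$ is given by a norm-convergent power series, hence is real-analytic in the operator norm; consequently $t\mapsto\delta_t(a\Omega)=E_t(a)\Omega$ is real-analytic in $\|\cdot\|_2$, with derivative $[z,\delta_t](a\Omega)$. Since $b\mapsto\|b\Omega\|_2=\|b\|_2$ is a norm on $\m$ and conditional expectations are uniformly contractive, $\|E_t(a)\|\le\|a\|$, the curve $t\mapsto E_t(a)\in\m$ is $C^1$ (indeed $C^\infty$) for every $a\in\m$, joining $E_{\n_0}$ to $E_{\n_1}$; likewise $\n_t=e^{tz}\n_0e^{-tz}$ is a norm-smooth path of subfactors interpolating $\n_0$ and $\n_1$.
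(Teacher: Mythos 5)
Your reduction is sound as far as it goes: the observations that $z\Omega=0$ (hence $e^{tz}\Omega=\Omega$), that $JzJ=z$ by uniqueness of the normalized exponent, that $R(\delta_t)=\overline{\n_t\Omega}$, and that \emph{granted} the containment $\n_t\subseteq\m$ the projection $\delta_t$ is the Jones projection of $\n_t$ and so restricts on $\m$ to the trace-preserving expectation $E_{\n_t}$, are all correct and correctly argued. But the proposal stops exactly where the real proof has to begin: the containment $\n_t\subseteq\m$ (equivalently, invariance of the dense, non-closed subspace $[\m]=\{[x]:x\in\m\}\subset L^2(\m,\tau)$ under $e^{tz}$) is never established. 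You explicitly defer it (``I would attack it by\dots'', ``should force\dots''), yet it is the entire analytic content of the Lemma. Moreover, the sketched attack is unlikely to succeed as described: $z$ is not a word in $\m$, $e_0$, $e_1$ --- it arises from a \emph{logarithm}, i.e.\ from functional calculus --- and since $[\m]$ is not closed in $L^2(\m,\tau)$, the set of operators leaving $[\m]$ invariant is not norm-closed, so invariance cannot be passed through limits of polynomial expressions in the generators without a quantitative mechanism that your outline does not supply.

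The paper's proof consists precisely of such a mechanism: (i) the polar-decomposition formula (\ref{formula}) from the proof of Theorem \ref{teorema1} gives $e^{z}=(2e_0-1)(e_0+e_1-1)\,|e_0+e_1-1|^{-1}$, where $2e_0-1$ and $e_0+e_1-1$ visibly preserve $[\m]$ because $e_i[x]=[E_i(x)]$; (ii) an auxiliary lemma stating that if a normal $m\in\m_0$ preserves $[\m]$ then so does $f(m)$ for $f$ continuous on $\sigma(m)$ --- proved by polynomial approximation together with a conversion of operator-norm estimates on $\b(L^2(\m))$ into norm estimates in $\m$ --- applied first to $|e_0+e_1-1|^{-1}$, then (using $\|e_0-e_1\|<1\Rightarrow\|e^z-1\|<\sqrt2$, so a continuous logarithm exists on $\sigma(e^z)$) to $z$ itself, and finally to $e^{tz}$ for every $t$; (iii) Tomiyama's theorem, which recognizes the resulting idempotent, $*$-preserving, normal contractions $\delta(t)|_\m:\m\to\m$ as conditional expectations onto $\n_t$. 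A secondary flaw: your closing smoothness argument is invalid as stated, since real-analyticity of $t\mapsto E_t(a)\Omega$ in $\|\cdot\|_2$ together with the uniform bound $\|E_t(a)\|\le\|a\|$ does not give that $t\mapsto E_t(a)$ is $C^1$ in the norm of $\m$ (the two norms are inequivalent). The correct argument again requires the invariance above: once $z$ preserves $[\m]$, it induces (closed graph theorem) a bounded operator on the Banach space $\m$, and then $t\mapsto E_t=e^{tz}e_0e^{-tz}\big|_\m$ is norm-analytic. So both the main step and the smoothness claim need the machinery your proposal leaves out.
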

\begin{proof}
We shall use repeatedly the following argument. Suppose that $m\in\m_0$ is normal, and satisfies that $m[\m]\subset[\m]$, i.e., $m$ as an operator acting in $L^2(\m)$, leaves the dense linear manifold $[\m]=\{[x]: x\in\m\}$ invariant, and let $f$ be a continuous function in the spectrum $\sigma(m)$ of $m$. Then $f(m)$ also leaves $[\m]$ invariant. Indeed, let $p_k(z,\bar{z})$ be polynomials in $z$ and $\bar{z}$ which converge uniformly to $f(z)$ in $\sigma(m)$. Clearly $p_k(m,m^*)$ leave $[\m]$ invariant.  Let $x\in\m$. Then, if we denote by $L_x$ the element $x$ acting by left multiplication on $L^2(\m)$,
$$
\|p_k(m,m^*)(x)-p_j(m,m^*)(x)\|_{\m}=\|(p_k(m,m^*)-p_j(m,m^*))L_x\|_{\b(L^2(\m))}
$$
$$
\le \|p_k(m,m^*)-p_j(m,m^*)\|_{\b(L^2(\m))}\|L_x\|_{\b(L^2(\m))}=
\|p_k(m,m^*)-p_j(m,m^*)\|_{\b(L^2(\m))}\|x\|.
$$
It follows that $p_k(m,m^*)(x)$ is a Cauchy sequence in $\m$, which converges to $f(m)(x)\in\m$. 

Consider now the element $e_0+e_1-1\in\m_0$. Note  that $\|e_0-e_1\|<1$ implies that $e_0+e_1-1$ is invertible. Clearly,  $e_0+e_1-1$ leaves $[\m]$ invariant:
$$
(e_0 +e_1-1)([x])=[E_0(x)+E_1(x)-1]\in[\m]
$$
for all $x\in\m$. By the above argument, it follows that $|e_0+e_1-1|^{-1}$ leaves $[\m]$ invariant. Thus
$$
e^z=(2e_0-1)(e_0+e_1-1)|e_0+e_1-1|^{-1}
$$
leaves $[\m]$ invariant. On the other hand, as remarked before, the fact that $\|e_0-e_1\|<1$ also implies that $\|e^z-1\|<\sqrt2<2$ (or equivalently, that $\|z\|<\pi/2$). It follows that there is a continuous logarithm defined in the spectrum of $e^z$,  $arg:\sigma(e^z)\to(-\pi/2,\pi/2)$. Therefore, again using the  argument at the beginning of this proof, it follows that $z$ leaves $[\m]$ invariant. Therefore, $e^{tz}$ leave $[\m]$ invariant for $t\in[0,1]$. It follows that $\delta(t)$, restricted to $\m$, induce the linear mappings
$$
\delta(t)|_{\m}=e^{tz}e_0e{-tz}|_\m:\m\to\m.
$$
The range of $\delta(t)|_\m$ is $e^{tz}L^2(\n_0) e^{-tz}\cap \m=e^{tz}\n_0 e^{-tz}=\n_t$. Clearly these maps are idempotents, $*$-preserving, normal, and contractive for the norm of $\m$. Thus, by the theorem of Tomiyama \cite{tomiyama}, they are normal conditional expectations, interpolating between $E_0$ and $E_1$. The fact that the path is strongly smooth is also clear.  
\end{proof}
\begin{rem}
Let us recall Theorem 2.6 of \cite{strongly smooth}:

Let $\a$ be a unital C$^*$-algebra and suppose that for $t\in[0,1]$ one has subalgebras $1\in \b_t\subset\a$  and conditional expectations $E_t : \a\to\b_t$ . Assume that  for each $a\in\a$, the map $t\mapsto E_t (a)\in\a$ is continuously differentiable. Denote by $dE_t:\a\to\a$ the derivative of $E_t$: $dEt (a) = \frac{d}{dt} E_t (a)$. For each fixed $t$, the operator $dE_t : \a\to\a$ is bounded. 
Consider the differential equation, for $a\in\a$,  
\begin{equation}\label{ec transporte}
\left\{ \begin{array}{l} \dot{\alpha}(t)=[dE_t.E_t](\alpha(t)) \\ \alpha(0)=a \end{array} \right. .
\end{equation}
We call this equation the {\it parallel transport equation}. Denote by $\Gamma_t$ the propagator of this equation, i.e., the map $\Gamma_t:\a\to\a$ given by the solutions: $\Gamma_t(a)=\alpha(t)$ with $\alpha(0)=a$.
Then
\begin{itemize}
\item
$\Gamma_tE_0\Gamma_{-t}=E_t$, and
\item
$\Gamma_t|_{\b_0}:\b_0\to\b_t$ is a C$^*$-algebra isomorphism.
\end{itemize}
\end{rem}
\begin{coro}
Let $\n_0,\n_1\subset\m$ be subfactors with $\|e_0-e_1\|<1$. Then the exponentials $\Gamma_t=e^{tz}$ which induce the unique geodesic $\delta(t)=e^{tz}e_0e^{-tz}$ joining $e_0$ and $e_1$ in $\p_{\m_0}$  ($\m_0=<\m,e_0,e_1>''$), satisfy that 
$$
\Gamma_t|_{\n_0}:\n_0\to\n_t=e^{tz}\n_0e^{-tz}
$$
are normal $*$-isomorphisms. In particular, $\n_0$ and $\n_1$ are isomorphic. 
\end{coro}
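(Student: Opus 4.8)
The plan is to combine the preceding Lemma with the abstract parallel transport theorem recalled in the Remark (Theorem 2.6 of \cite{strongly smooth}). The Lemma already supplies the essential input: the path $E_t=\delta(t)|_\m:\m\to\n_t$ is a pointwise $C^1$ family of conditional expectations joining $E_{\n_0}$ and $E_{\n_1}$, with $\delta(t)=e^{tz}e_0e^{-tz}$, and with both $e^{tz}$ and $e^{-tz}$ leaving the dense manifold $[\m]\subset L^2(\m)$ invariant. Feeding this path into Theorem 2.6 with $\a=\m$ and $\b_t=\n_t$, I obtain a propagator $\Gamma_t$ of the transport equation satisfying $\Gamma_tE_0\Gamma_{-t}=E_t$ and such that $\Gamma_t|_{\n_0}:\n_0\to\n_t$ is a $C^*$-algebra isomorphism. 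Two things then remain: to identify this abstract transport, on $\n_0$, with conjugation by the geodesic exponentials $e^{tz}$, and to upgrade ``$C^*$-isomorphism'' to ``normal $*$-isomorphism''.

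For the identification I would show that for $a\in\n_0$ the propagator acts spatially, $\Gamma_t(a)=e^{tz}ae^{-tz}$. This is legitimate: by the preceding Lemma $\n_t=e^{tz}\n_0e^{-tz}\subset\m$, so conjugation by $e^{tz}$ does carry $\n_0$ into $\m$, with image exactly $\n_t$. To see that this spatial map solves the transport equation, I would differentiate, obtaining $\frac{d}{dt}(e^{tz}ae^{-tz})=[z,e^{tz}ae^{-tz}]$, and compare with the generator $[dE_t,E_t]$, rewriting the latter by means of $\delta(t)^2=\delta(t)$ and $\dot\delta(t)=[z,\delta(t)]$. Since the transport equation is linear, uniqueness of solutions with the common initial value $\Gamma_0(a)=a$ forces $\Gamma_t|_{\n_0}$ to coincide with conjugation by $e^{tz}$.

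Once the identification is in hand, normality is immediate: conjugation by the unitary $e^{tz}$ is spatial, hence $\sigma$-weakly continuous, that is, normal. Therefore $\Gamma_t|_{\n_0}:\n_0\to\n_t$ is a normal $*$-isomorphism onto $\n_t$. Setting $t=1$ and observing that $\n_1=e^{z}\n_0e^{-z}$ is the range of $E_1=E_{\n_1}$ (because $\delta(1)=e_1$ is the Jones projection of $\n_1$), I get a normal $*$-isomorphism $\n_0\to\n_1$; in particular $\n_0$ and $\n_1$ are isomorphic.

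The main obstacle is the identification step, and the reason is a genuine subtlety of domains. Conjugation by $e^{tz}$ does \emph{not} preserve all of $\m$: since $z\in\m_0$ need not normalize the commutant $J\m J$, the map $\mathrm{Ad}(e^{tz})$ is not a self-map of $\m$, and so it cannot be literally equal to the abstract propagator $\Gamma_t:\m\to\m$. The equality holds only after restriction to $\n_0$, where the image is known to return to $\m$. Carrying out the generator computation therefore requires keeping careful track of the correspondence between elements of $\m$ and their left-multiplication operators on $L^2(\m)$ on the one hand, and the action of $e^{tz}\in\m_0$ on the other, restricting every comparison to $\n_0$.
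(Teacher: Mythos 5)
Your overall architecture coincides with the paper's: feed the Lemma's $C^1$ path $E_t=\delta(t)|_\m$ into Theorem 2.6 of \cite{strongly smooth}, then identify the abstract propagator with a map induced by $e^{tz}$. The genuine gap is in the identification step, and it is not the domain issue you single out. The paper identifies $\Gamma_t$ with the \emph{left action} of $e^{tz}$ on the invariant dense manifold $[\m]\subset L^2(\m)$, i.e. $\Gamma_t(x)\leftrightarrow e^{tz}[x]$, and that verification closes using only the codiagonality identities $e_0ze_0=0$ and $ze_0+e_0z=z$. You instead claim that conjugation, $\beta(t)=e^{tz}ae^{-tz}$ for $a\in\n_0$, solves the transport equation, and propose to check this from $\delta(t)^2=\delta(t)$ and $\dot\delta(t)=[z,\delta(t)]$ alone. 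Those two identities do not suffice. Test it at $t=0$: for $a\in\n_0$, in the vector picture $[dE_0,E_0](a)$ corresponds to $(ze_0+e_0z)[a]=za[1]$ (because $e_0[a]=[a]$ and $e_0ze_0=0$), while $[z,a]=\dot\beta(0)$ corresponds to $za[1]-az[1]$. Hence conjugation solves the equation on $\n_0$ if and only if $az[1]=0$ for every $a\in\n_0$, i.e. if and only if $z[1]=0$, where $[1]$ is the trace vector. This is precisely the discrepancy between the left action of $e^{tz}$ on $[\m]$ (which is what the propagator actually is) and conjugation by $e^{tz}$: the two agree on $\n_0$ exactly because $e^{tz}$ fixes $[1]$ --- a fact your proposal neither states nor proves, and which is not a formal consequence of the tools you list.

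The missing fact is true and easy to supply, so your route is repairable: each $E_t$ is unital, so $\delta(t)[1]=[E_t(1)]=[1]$ for all $t$; differentiating at $t=0$ gives $z[1]=e_0z[1]$, while codiagonality gives $z[1]=e_0^\perp ze_0[1]$, and these force $z[1]=0$. (Alternatively, it follows from $e^{z}=(2e_0-1)(e_0+e_1-1)|e_0+e_1-1|^{-1}$ together with $e_0[1]=e_1[1]=[1]$ and $\|z\|<\pi$.) Once this is inserted, your uniqueness argument for the linear ODE is valid, and your normality observation (conjugation by a unitary is spatial, hence normal) is actually a point in your favor, since Theorem 2.6 only yields a C$^*$-isomorphism and the paper does not address normality explicitly. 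But as written, the central computation of your proposal does not go through; the obstacle you do flag --- that conjugation by $e^{tz}$ is not a self-map of $\m$ --- is correct but peripheral, whereas the real subtlety is the gap between action on vectors and conjugation, which only the relation $z[1]=0$ closes.
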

\begin{proof}
It is straightforward to verify that $e^{tz}$ are the propagators  $\Gamma_t$ of equation (\ref{ec transporte}) in this case: since $E_t=\delta(t)|_{\m}=e^{tz}|_\m E_0e^{-tz}|_\m$, we have that (the maps below are restricted to $\m$):
$$
dE_t=ze^{tz}E_0e^{-tz}-e^{tz}E_0e^{-tz}z
$$
and after straightforward computations
$$
[dE_t,E_t]=ze^{tz}E_0e^{-tz}-2e^{tz}E_0zE_0e^{-tz}+e^{tz}E_0e^{-tz}z.
$$
Since $z$ is $e_0$ co-diagonal, it maps $L^2(\n_0)$ into $L^2(\n_0)^\perp$, and therefore $E_0zE_0=0$. Thus, 
$$
[dE_t,E_t]e^{tz}=ze^{tz}E_0+e^{tz}E_0z=e^{tz}(zE_0+E_0z).
$$
The fact that the element $z$ is $e_0$ co-diagonal, also means that 
$$
z=e_0z(1-e_0)+(1-e_0)ze_0=e_0z-2e_0ze_0+ze_0=ze_0+e_0z.
$$
Restricted to $\m$ gives $zE_0+E_0z=z$. Therefore, for $x\in\m$,
$$
[dE_t,E_t]e^{tz}x=e^{tz}(zE_0+E_0z)zx=e^{tz}zx=(e^{tz}x)^{\cdot}.
$$
That is, $\alpha(t)=e^{tz}x$ is the solution of (\ref{ec transporte}) with $\alpha(0)=x$, i.e. $\Gamma_t=e^{tz}|_\m$ is the propagator of this equation, and the proof follows using Theorem 2.6 of \cite{strongly smooth}
\end{proof}

\begin{rem}
Suppose that  $e_0, e_1$ as above satisfy the  condition $e_0\wedge e_1^\perp=0=e_0^\perp\wedge e_1$ (weaker that $\|e_0-e_1\|<1$), then there exists a unique geodesic $\delta(t)=e^{tz}e_0e^{-tz}$ joining $e_0$ and $e_1$. We would like to know if also in this case, the propagators $\Gamma_t$ of the parallel transport equation induce as in the above case, a curve of automorphisms. Following the same argument as above, it amounts to knowing if the projections $\delta(t)$ induce conditional expectations onto the intermediate algebras $e^{tz}\n_0e^{-tz}$. 
\end{rem}

{\sc (Esteban Andruchow)} {Instituto de Ciencias,  Universidad Nacional de Gral. Sar\-miento,
J.M. Gutierrez 1150,  (1613) Los Polvorines, Argentina and Instituto Argentino de Matem\'atica, `Alberto P. Calder\'on', CONICET, Saavedra 15 3er. piso,
(1083) Buenos Aires, Argentina.}

\end{document}